\providecommand{\U}[1]{\protect\rule{.1in}{.1in}}
\theoremstyle{plain}
\newtheorem{algorithm}{Algorithm}[section]
\newtheorem{definition}[algorithm]{Definition}
\newtheorem{lemma}[algorithm]{Lemma}
\newtheorem{theorem} [algorithm] {Theorem}
\newtheorem{theoremlet'}[thm]{Theorem$'$}
\newtheorem*{H-co}{h-Cobordism Theorem}
\newtheorem{observe}[algorithm]{Observation}
\newtheorem{remark}[algorithm]{Remark}
\newtheorem{proposition}[algorithm]{Proposition}
\newtheorem*{observe*}{Observation}
\DeclareMathOperator{\Fix}{Fix}
\def\codim{\textrm{codim}}
\def\bdm{\begin{displaymath}}
\def\edm{\end{displaymath}}
\def\beq{\begin{equation}}
\def\eeq{\end{equation}}
\def\bes{\begin{equation*}}
\def\ees{\end{equation*}}
\def\epcm{\end{picture}\end{center}\end{minipage}}
\def\bpcm{\begin{minipage}{80pt}\begin{center}\begin{picture}}
\def\Z{\mathbb{Z}}
\def\RP{\mathbb{R}\mathrm{P}}
\def\t2{T^2}
\def\f4{F_4}
\def\g2{G_2}
\def\p2{\frac{\pi}{2}}
\def\Fix{\textrm{Fix}}
\def\dim{\textrm{dim}}
\def\CP{\mathbb{C}\mathrm{P}}
\def\RP{\mathbb{R}\mathrm{P}}
 \numberwithin{equation}{section}
  \numberwithin{figure}{section}
\newtheorem*{acknowledge}{Acknowledgements}
\newtheorem*{Borel}{Borel Formula}
\newtheorem*{CL}{Connectedness Lemma}
\newtheorem*{A}{Theorem A}
\newtheorem*{far}{Corollary B}
\newcommand{\ep}{\epsilon}
\DeclareMathOperator{\diag}{diag}
\newcommand{\embedded}{\hookrightarrow}
\begin{document}

\title[$\mathbb{Z}_{2}$-torus actions on positively curved manifolds]{$\mathbb{Z}_{2}$-torus actions on positively curved manifolds}
\author[Ghazawneh]{Farida Ghazawneh}
\address[Ghazawneh]{Department of Mathematics, Statistics, and Physics, Wichita State University, Wichita, Kansas}
\email{ftghazawneh@shockers.wichita.edu}

\subjclass[2000]{Primary: 53C20; Secondary: 57S25} 

\date{\today}

\begin{abstract}

Kennard, Khalili Samani, and Searle showed that for a 
$\Z_2$-torus acting on a closed, positively curved Riemannian $n$-manifold, $M^{n}$, with a non-empty fixed point set for $n$ large enough and $r$ approximately half the dimension of $M$, then $M^n$ is homotopy equivalent to $S^n$, $\RP^n$, $\CP^{\frac{n}{2}}$, or a lens space. In this paper, we lower $r$ to approximately $2n/5$ and show that we still obtain the same result.
\end{abstract}

\maketitle

\section{Introduction}
 
 To date, other than some special examples in  dimensions less than or equal to $24$, all known examples of closed positively curved manifolds are spherical in nature and are highly symmetric.
The Symmetry Program suggests that one may approach the classification of such manifolds by considering those with "large" symmetries.  Indeed,  over the last 30 years this has been a fruitful means of tackling the classification problem.  Since the group of isometries of a closed manifold is a compact Lie group and every continuous, compact Lie group contains a maximal torus, the case of torus actions have naturally been studied extensively, see, for example, work of Grove and Searle~\cite{GS}, Rong~\cite{R}, Fang and Rong~\cite{FR05}, Wilking~\cite{wilking2003torus}, as well as more recent work of Kennard, Weimeler, and Wilking~\cite{kennard2021splitting, kennard2022positive}. In turn, since a torus is a product of circles and therefore contains a product of cyclic groups, it is natural to also consider $\Z_k^r$-actions. Previous work on positively and non-negatively curved manifolds with discrete symmetries of dimension $4$ can be found in Yang \cite{Yan94}, Hicks \cite{Hic97}, Fang \cite{Fan08}, and Kim and Lee \cite{KL09}, and of higher dimensions in Fang and Rong \cite{FR04}, Su and Wang \cite{SW08},  Wang \cite{W10}, and more recently Kennard, Khalili Samani and Searle \cite{KKSS}. For all but the last of these results, a cyclic group  or a product of cyclic groups has been chosen so that the order of each cyclic group is large enough to produce a fixed point.

In~\cite{KKSS}, they consider $\Z_2^r$-actions with the additional assumption that there exists a fixed point.
 We also make the assumption that a $\Z_2^r$-action has a fixed point and show that the lower bound of approximately $n/2$ obtained in Theorem B of \cite{KKSS} (Theorem~\ref{5.3} in this paper) can be improved to approximately $2n/5$ in the following theorem.

\begin{A}\label{A}

Assume $\mathbb{Z}_{2}^{r}$ acts effectively and isometrically on a closed, positively curved manifold $M^{n}$ with a non-empty fixed-point set with $n\geq15$. If  $$r\geq.3992n+1.5\log_{2}n+2.772,$$ then $M$ is homotopy equivalent to $S^n$, $\RP^n$, $\CP^{n/2}$, or a lens space. 
 
\end{A}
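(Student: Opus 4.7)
The plan is to combine induction on $n$ with the standard toolkit of transformation groups in positive curvature---Wilking's Connectedness Lemma, the Periodicity Lemma, and the Borel formula---refining the counting argument that underlies the $n/2$ bound of Theorem 5.3. The induction uses Theorem 5.3 either as a base case for small dimensions or as a fallback step when $r$ already exceeds the weaker KKSS threshold. Fix a point $p \in \Fix(\Z_2^r, M)$ and consider the isotropy representation on $T_pM$, which decomposes into weight spaces $T_pM = \bigoplus_\chi V_\chi$ indexed by the $2^r$ characters $\chi \colon \Z_2^r \to \{\pm 1\}$. For each non-trivial involution $\sigma$, the fixed-point component $F_\sigma$ through $p$ is totally geodesic of codimension $\sum_{\chi(\sigma)=-1} \dim V_\chi$.

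The core step is a pigeonhole argument sharper than the one used for the $n/2$ bound. Under the hypothesis $r \geq 0.3992\,n + 1.5\log_2 n + 2.772$, I would aim to produce an involution $\sigma$ whose fixed component through $p$ has codimension at most some $k$ with $k \leq 2n/5 + O(\log n)$. The improvement of the coefficient from $1/2$ to roughly $2/5$ should come not merely from counting individual involutions against weight-space dimensions, but from tracking chains of fixed-point inclusions $F_{\sigma_1} \supseteq F_{\sigma_1\sigma_2} \supseteq \cdots$ and applying the Borel formula along such chains. This yields finer control over the admissible distributions of weight multiplicities than a single-involution count does, and it is what lets one trade codimension for rank more efficiently.

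Once such a $\sigma$ is located, Wilking's Connectedness Lemma gives that $F_\sigma \hookrightarrow M$ is $(n - 2k + 1)$-connected, while the quotient $\Z_2^{r-1} = \Z_2^r/\langle\sigma\rangle$ still acts on $F_\sigma$ with fixed point $p$ and (thanks to the slack provided by the $\log_2 n$ term) with rank large enough to apply the induction hypothesis. Consequently $F_\sigma$ is homotopy equivalent to one of $S^{n-k}$, $\RP^{n-k}$, $\CP^{(n-k)/2}$, or a lens space. Feeding this classification into the Periodicity Lemma applied to the pair $(M, F_\sigma)$ forces the cohomology of $M$ to be periodic with period matching that of $F_\sigma$, and standard rigidity upgrades the cohomological agreement to a homotopy equivalence; the non-simply-connected targets ($\RP^n$ and the lens spaces) are handled by passing to the universal cover and descending via the fundamental group action.

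The main obstacle will be the combinatorial optimization in the pigeonhole step: one must show that the trade-off between the codimension bound $k$ (which must be small enough that the Periodicity Lemma detects period at most $2$) and the rank consumed when passing from $M$ to $F_\sigma$ (which must leave enough rank to close the induction) balances at precisely the constant $0.3992$. The additive $1.5\log_2 n$ term should reflect the logarithmic loss per inductive step, while the constant $2.772$ absorbs Theorem 5.3 as the base of the induction. A secondary issue is that the induction must close uniformly across all four possible homotopy types of $F_\sigma$, which will likely require a small case split according to whether the generating cohomology class of $F_\sigma$ sits in degree $1$, $2$, or $n-k$, since these three cases interact differently with the Periodicity Lemma on $M$.
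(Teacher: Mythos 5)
Your high-level scaffolding (induction on $n$, Theorem~\ref{5.3} as the anchor, passing the $\Z_2^{r-1}$-action to a fixed-point component, Connectedness Lemma, universal cover for the non-simply-connected targets) matches the paper, but the core pigeonhole step is structurally different from what the paper does, and as proposed it would not close.

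The gap is in the codimension-producing step. You propose to extract a single involution $\sigma$ with $\codim(F_\sigma) \lesssim 2n/5 + O(\log n)$ and then to feed $(M, F_\sigma)$ into the Periodicity Lemma. But a codimension near $2n/5$ is too large for Wilking's periodicity machinery: Theorem~\ref{wilking} (Lemma 6.2 of Wilking) needs $k \leq (n+3)/4$, and there is no version that works from a single submanifold of codimension $\sim 2n/5$. The paper instead produces \emph{two} involutions $\iota_1, \iota_2$ with $\iota_2 \notin \langle \iota_1 \rangle$, whose fixed components have codimensions $k_1 \leq (n+3)/4$ and $k_2 \leq (n+1)/3$. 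The induction runs on the \emph{larger}-codimension $N_2$ (so that the rank consumed is at worst $r - 1$ against a dimension drop of at least $3$), while the \emph{smaller}-codimension $N_1$ is used together with $N_2$ in Proposition~\ref{3.3}, whose hypotheses $4k_1 \leq n+3$ and $k_1 + 2k_2 \leq n+1$ (or $g + k_1 + 2k_2 \leq n+3$) are exactly what the two-involution structure delivers. A single submanifold cannot substitute for this pair.

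The second missing ingredient is the source of the bound itself. You attribute the constant $0.3992$ to a combinatorial optimization over chains of fixed-point inclusions via the Borel formula, with a logarithmic loss per inductive step. In the paper the bound has a completely different origin: it comes from the improved Elias--Bassalygo bound for error-correcting codes (Theorem~\ref{1.2}, due to Wilking). The isotropy representation at a fixed point is an injection $\Z_2^r \to \Z_2^n$, the codimension of a fixed component equals the Hamming weight of the corresponding codeword, and if every nontrivial element had codimension $> (n+3)/4$ (resp.\ $> (n+1)/3$ off $\langle\iota_1\rangle$) one could apply the coding bound with $\delta = 1/4$ (resp.\ $\delta = 1/3$) to force $r$ below the hypothesized threshold. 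The numbers $0.3992$ and $2.772$ are precisely $1 - H(J(1/4))$ and the corresponding constant term, slightly rounded up; the $1.5\log_2 n$ is the logarithmic term already present in the Elias--Bassalygo bound, not an inductive loss. The Borel formula does appear in the paper, but only in the low-codimension subcases ($k_1 = k_2 = 2$, via Observation~\ref{observation}), not in the main counting step.

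Finally, the low-codimension cases $k_2 \leq 2$ need separate treatment (Codimension One and Two Lemmas of KKSS, Lemma~\ref{1PLUS2}), since there the induction does not lower the dimension by enough and Proposition~\ref{3.3} does not directly give the homotopy classification; your proposal does not address this.
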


\noindent Since $2n/5>.3992n+1.5\log_{2}n+2.772$ for $n\geq31,482$, we obtain the following corollary.

\begin{far}\label{corollaryB}
    Assume $\mathbb{Z}_{2}^{r}$ acts effectively and isometrically on a closed, positively curved manifold $M^{n}$ with a non-empty fixed-point set. If $n\geq31,482$ and $r\geq\frac{2n}{5}$, then $M$ is homotopy equivalent to $S^n$, $\RP^n$, $\CP^{n/2}$, or a lens space. 
\end{far}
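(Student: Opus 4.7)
The corollary follows immediately from Theorem A once one establishes the elementary numerical inequality
\[
\tfrac{2n}{5} \;>\; 0.3992\,n + 1.5 \log_{2} n + 2.772 \qquad \text{for all integers } n \geq 31{,}482,
\]
which is precisely the sentence the author inserts between Theorem A and the corollary statement. My plan is therefore twofold: verify this inequality by a short one-variable calculus argument, and then invoke Theorem A directly.

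For the inequality, set
\[
f(n) \;=\; 0.0008\,n - 1.5 \log_{2} n - 2.772 \;=\; \tfrac{2n}{5} - \bigl(0.3992\,n + 1.5\log_{2} n + 2.772\bigr).
\]
Regarded as a smooth function on $(1,\infty)$, one computes $f'(n) = 0.0008 - 1.5/(n \ln 2)$, which is positive precisely when $n > 1.5/(0.0008 \ln 2) \approx 2{,}705$. Hence $f$ is strictly increasing past that threshold, and it suffices to verify $f(31{,}482) > 0$ at a single point. Direct evaluation gives $0.0008 \cdot 31{,}482 \approx 25.186$ while $1.5 \log_{2}(31{,}482) + 2.772 \approx 22.413 + 2.772 = 25.185$, so $f(31{,}482) > 0$ and therefore $f(n) > 0$ for every integer $n \geq 31{,}482$.

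With the inequality in hand, the remainder is immediate. Assume the hypotheses of the corollary: $\Z_{2}^{r}$ acts effectively and isometrically on a closed, positively curved $n$-manifold $M^n$ with non-empty fixed-point set, $n \geq 31{,}482$, and $r \geq \tfrac{2n}{5}$. Then $r \geq \tfrac{2n}{5} > 0.3992\,n + 1.5 \log_{2} n + 2.772$, and since $n \geq 31{,}482 \geq 15$, the hypotheses of Theorem A are fulfilled. Theorem A then delivers the desired conclusion that $M^n$ is homotopy equivalent to $S^n$, $\RP^n$, $\CP^{n/2}$, or a lens space. The argument presents no real obstacle; the only mildly delicate point is that the inequality is essentially tight at $n = 31{,}482$ (the cutoff was defined by equating the two bounds), so the numerical evaluation of $f(31{,}482)$ must be carried out with enough precision to pin down its sign.
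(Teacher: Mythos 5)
Your proof is correct and takes exactly the route the paper does: observe that $\tfrac{2n}{5} > 0.3992\,n + 1.5\log_2 n + 2.772$ for $n \geq 31{,}482$ and then apply Theorem A. The paper simply asserts the numerical inequality without justification, so your calculus verification (monotonicity of $f$ past $n \approx 2{,}705$ plus the single-point check at $n = 31{,}482$) is a welcome, if elementary, supplement rather than a departure.
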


\begin{remark}
  Theorem A of~\cite{KKSS} states that an isometric $\Z_2^r$-action on a closed, positively curved manifold, with a fixed point is not effective if $r>n$. In particular, for $n<15$, we have $.3992n+1.5\log_{2}n+2.772>n$ and so we do not consider these dimensions. For $n\leq126$, the bound given in Theorem~\ref{5.3} is better than the one given in Theorem A.
  \end{remark}

\subsection{Organization}
The paper is organized as follows.
In Section \ref{s2}, we gather background material needed to prove \hyperref[A]{Theorem A} and in Section \ref{s3}, we prove \hyperref[A]{Theorem A}.

\begin{acknowledge}

The author is extremely grateful to her PhD advisor, Catherine
Searle, for suggesting this area of study to her and for her
mathematical guidance and support. The author also thanks Lee Kennard for helpful conversations and for the suggestion to consider this problem. Finally, she gratefully acknowledges partial support by Catherine Searle's NSF Grants DMS-1906404 and DMS-2204324. This work forms a part of the author's thesis.

\end{acknowledge}

\section{Preliminaries}\label{s2}
In this section we gather background material used in the proof of the main theorem.

\subsection{Error Correcting Codes}

We use the theory of error correcting codes to obtain bounds on the codimension of fixed-point sets of involutions. Before we begin, we recall the definition of the Hamming weight and the Hamming distance.

\begin{definition} [\bf{Hamming weight}] Let $\iota\in\mathbb{Z}_{2}^{n}$ be an involution. The Hamming weight of $\iota$, denoted by $|\iota|$, is the number of non-trivial entries in $\iota$. The Hamming distance between two involutions $\iota_1$, $\iota_{2}\in\Z_2^n$ is $|\iota_{1}\iota_{2}|$.
    
\end{definition}
An error correcting code can be thought of as an injective linear map $\Z_2^r\rightarrow\Z_2^n$. For an involution $\iota$ acting isometrically on a manifold $M$, let $N$ be a fixed-point set component of $\iota$. The codimension of $N$ is equal to the Hamming weight of $\iota$, where $\iota$ is considered as a linear map on $T_{x}M$ for some $x\in N$. The following theorem from~\cite{KKSS}, which follows from Proposition 3.1 in Wilking~\cite{wilking2003torus}, is an improvement of the Elias-Bassalygo bound for error correcting codes.

\begin{theorem}\label{1.2}\cite{wilking2003torus} Fix $\delta\in(0,\frac{1}{2})$. If $\rho:\mathbb{Z}_{2}^{r}\rightarrow\mathbb{Z}_{2}^{n}$ is an injection, that is not necessarily linear, such that $|\rho(\iota)|\geq\delta n$ for all non-trivial $\iota\in\mathbb{Z}_{2}^{r}$, then $$r\leq(1-H(J(\delta)))n+1.5\log_{2}n+0.5\log_{2}(\frac{1-J(\delta)}{J(\delta)})+1.5,$$ where $J(\delta)=\frac{1}{2}(1-\sqrt{1-2\delta})$ and $H(\epsilon)=-\epsilon\log_{2}\epsilon-(1-\epsilon)\log_{2}(1-\epsilon).$
    
\end{theorem}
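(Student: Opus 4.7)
The plan is to follow the Elias--Bassalygo philosophy from coding theory: view $C := \rho(\mathbb{Z}_{2}^{r}) \subseteq \mathbb{Z}_{2}^{n}$ as a binary code of size $2^{r}$, every non-trivial element of which has Hamming weight at least $\delta n$, and seek an upper bound on $|C|$.

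First I would set the \emph{Johnson radius} $w := \lfloor J(\delta) n \rfloor$, chosen so that $J(1-J) = \delta/2$, which is the precise threshold at which Hamming balls of radius $w$ can contain only polynomially many codewords of a code with minimum weight $\delta n$. The key lemma, in the spirit of Plotkin and Johnson, asserts that every Hamming ball of radius $w$ contains at most $Kn$ elements of $C$ for an absolute constant $K$ depending only on $\delta$; one proves this by summing pairwise distances among codewords in the ball, bounding above by the triangle inequality $d(c,c') \le 2w$ and below by the weight hypothesis, and then solving a quadratic inequality. Next I would double-count pairs $(x, c) \in \mathbb{Z}_{2}^{n} \times C$ with $d(x, c) \leq w$: the ball bound gives an upper total of $2^{n} \cdot Kn$, while directly the total equals $|C| \cdot |B(0, w)|$. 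This yields
$$2^{r} \leq \frac{2^{n} \cdot K n}{|B(0, w)|}.$$
Finally, I would invoke the sharp binomial estimate
$$|B(0, w)| \geq \binom{n}{w} \geq \frac{2^{H(J(\delta)) n}}{\sqrt{8 J(\delta)(1-J(\delta))\, n}},$$
take $\log_{2}$ of both sides, and collect the error terms: the $1.5\log_{2}n$ contribution arises from $\log_{2}(Kn) + \tfrac{1}{2}\log_{2} n$, and the $0.5\log_{2}\bigl(\tfrac{1-J(\delta)}{J(\delta)}\bigr)$ term arises from absorbing the Stirling correction $\sqrt{J(\delta)(1-J(\delta))}$.

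The subtle point, and what I expect to be the main obstacle, is that $\rho$ is not required to be linear. When $\rho$ is a homomorphism, the weight condition immediately upgrades to a minimum-distance condition via $d(\rho(\iota_{1}), \rho(\iota_{2})) = |\rho(\iota_{1}\iota_{2})| \geq \delta n$, and the Johnson step goes through verbatim. Without linearity, the component-wise sum $c_{1} \oplus c_{2}$ need not lie in $C$, so one cannot directly invoke the weight hypothesis on pairwise differences. The resolution, and presumably the core of Wilking's Proposition 3.1, is to run an \emph{averaging} version of Plotkin's argument: bound the expected weight of $c_{1} \oplus c_{2}$ over uniformly random pairs $(c_{1}, c_{2}) \in C \times C$ by a second-moment expansion around the ball center, and use injectivity of $\rho$ to translate back to the stated weight hypothesis. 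Carrying out this averaging with the sharp constants—so that the resulting bound matches the stated $+1.5$ exactly rather than a softer $O(1)$—is where I would expect to spend the most effort.
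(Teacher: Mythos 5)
The paper does not actually prove Theorem~\ref{1.2}; it is quoted from \cite{KKSS} as a consequence of Proposition~3.1 in \cite{wilking2003torus}, so there is no in-paper argument to compare against. Your Elias--Bassalygo framework (choose the Johnson radius $w \approx J(\delta)n$, argue that each Hamming ball of radius $w$ meets $C$ in $O(n)$ points, double-count against $|B(0,w)| \geq \binom{n}{w}$, and use Stirling to extract the $1.5\log_2 n$ and constant terms) is the right skeleton, and your accounting of where the $1.5\log_2 n$ and $0.5\log_2\bigl(\tfrac{1-J}{J}\bigr)$ terms come from is plausible.

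The genuine gap is in the step you yourself flag as the hard one, and the proposed ``averaging version of Plotkin'' cannot close it. If $\rho$ is a bare injection and the only hypothesis is the weight condition $|\rho(\iota)| \geq \delta n$ for non-trivial $\iota$, then the image $C$ is just a set of $2^r$ vectors whose non-zero members have weight at least $\delta n$, with no control whatsoever on pairwise distances. That hypothesis is far too weak: the set of all vectors of weight at least $\delta n$ has size roughly $2^{n-1}$ for any $\delta < \tfrac12$, so one can realize $r$ close to $n$ by an injective (necessarily nonlinear) map, demolishing the claimed bound. The Johnson step in your outline silently passes from ``weight $\geq \delta n$'' to ``pairwise distance $\geq \delta n$,'' and no averaging argument can supply that implication, because the implication is false. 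In other words, the proposed averaging step would be trying to prove a false statement.

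What actually makes the theorem true is that the hypothesis must be read as a minimum \emph{distance} condition on $C = \rho(\Z_2^r)$. In the only place the theorem is used in this paper (the proof of Lemma~\ref{mainlemma}), $\rho$ is the isotropy representation at a fixed point, hence a group homomorphism, i.e.\ $\F_2$-linear; then $d(\rho(\iota_1),\rho(\iota_2)) = |\rho(\iota_1\iota_2)| \geq \delta n$ and the weight condition upgrades automatically to a distance condition. With that in hand, your original double-counting argument already suffices with no averaging needed. The phrase ``not necessarily linear'' is best understood as allowing the image code $C$ to be a nonlinear subset of $\Z_2^n$ with minimum distance $\geq \delta n$ (the natural generality for Elias--Bassalygo), not as dropping the constraint that weights control distances. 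Your instinct about where the difficulty lies was right; the fix is to correct the reading of the hypothesis, not to repair a false statement by averaging.
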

\begin{remark}
 The function $J(\delta)$ is an increasing function that maps the interval $[0,\frac{1}{2}]$ onto itself, and is related to the Johnson bound. The entropy function $1-H(\epsilon)$ is a decreasing function on $[0,\frac{1}{2}]$.
\end{remark}

\subsection{Topological and Geometric preliminaries} In this subsection, we recall some useful topological and geometric results. We begin by recalling the Connectedness Lemma from~\cite{wilking2003torus}.

\begin{CL}\label{CL} \cite{wilking2003torus}
Let $M^n$ be a closed Riemannian manifold with positive sectional curvature. The following hold:
       \begin{enumerate}[font=\normalfont]
       \item If $N^{n-k}$ is a closed, totally geodesic submanifold of $M$, then the inclusion map $N^{n-k}\embedded M^n$ 
       is $(n-2k+1)$-connected.
       \item If $N_1^{n-k_1}$ and $N_2^{n-k_2}$ are closed, totally geodesic submanifolds of $M$ with $k_1 \leq k_2$ and $k_1 +k_2\leq n$, then 
       the inclusion $N_1^{n-k_1}\cap N_2^{n-k_2}\embedded N_2^{n-k_2}$ is $(n-k_1-k_2)$-connected.
       \end{enumerate}
\end{CL}

The next result, a reformulation of the proof of Lemma 6.2 from~\cite{wilking2003torus}, is an application of the~\hyperref[CL]{Connectedness Lemma}. It states that if the codimension of a submanifold, $N\subset M$, is bounded above by approximately one fourth the dimension of $M$ and the cohomology ring of $N$ is of a certain type, then we obtain information about the cohomology ring of $M$.

\begin{theorem}\label{wilking}\cite{wilking2003torus}
Let $N^{n-k} \subseteq M^n$ be a totally geodesic embedding of a closed, simply connected, positively curved manifold such that $k \leq \tfrac{n+3}{4}$. If $N$ has the cohomology ring of a sphere, complex projective space, or quaternionic projective space, then the same holds for $M$. Moreover, $k$ is even in the case of complex projective spaces, and $k$ is divisible by four in the case of quaternionic projective spaces.
\end{theorem}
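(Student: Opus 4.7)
The plan is to combine the \hyperref[CL]{Connectedness Lemma} with Poincar\'e duality on $M$. Applying part~(1) of the lemma to $N\embedded M$, the inclusion is $(n-2k+1)$-connected, and the hypothesis $k\leq(n+3)/4$ makes this integer at least $(n-1)/2$. In particular, $M$ inherits simple connectedness (and hence orientability) from $N$, the restriction $H^{i}(M;\Z)\to H^{i}(N;\Z)$ is an isomorphism for $i\leq n-2k$, and it is injective at $i=n-2k+1$.

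First I would dispatch the sphere case. Since $H^{\ast}(N;\Z)$ vanishes in all intermediate degrees, the previous step forces $H^{i}(M;\Z)=0$ for $0<i\leq n-2k+1$; Poincar\'e duality on $M$ then transfers this to $H^{j}(M;\Z)=0$ for $2k-1\leq j<n$. The precise numerical hypothesis $k\leq(n+3)/4$ is exactly what is needed for these two vanishing ranges to cover all of $[1,n-1]$, so $M$ is an integral cohomology sphere.

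Next I would handle the $\CP$ and $\HP$ cases uniformly by letting $d\in\{2,4\}$ be the degree of a ring generator $x\in H^{d}(N;\Z)$. Since $d\leq n-2k$, the isomorphism $H^{d}(M;\Z)\cong H^{d}(N;\Z)\cong\Z$ admits a lift $\bar{x}\in H^{d}(M;\Z)$, and because restriction is a ring map, $\bar{x}^{j}$ restricts to $x^{j}$ on $N$ for every $j$. The additive structure of $H^{\ast}(M;\Z)$ in the low range matches that of $N$ by the connectedness statement, and Poincar\'e duality propagates the pattern into the upper half exactly as in the sphere case. The moreover statement is then immediate: $\dim N=n-k$ must equal $2m$ in the $\CP$ case and $4m$ in the $\HP$ case, forcing $k$ to have the stated parity or divisibility.

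The main obstacle is the final ring-theoretic step: verifying that $\bar{x}^{n/d}$ generates $H^{n}(M;\Z)\cong\Z$ rather than being a proper multiple of the generator (or worse, torsion). Once this is established, the combination of additive matching in every degree with the relation $\bar{x}^{j}|_{N}=x^{j}$ forces $H^{\ast}(M;\Z)\cong\Z[\bar{x}]/(\bar{x}^{n/d+1})$, matching the cohomology ring of $\CP^{n/2}$ or $\HP^{n/4}$. The cleanest route is to invoke the non-degeneracy of the Poincar\'e pairing on $M$ in concert with the partial ring isomorphism supplied by high connectivity, following Wilking's treatment in~\cite{wilking2003torus}. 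Extra care is needed at the boundary $k=\lfloor(n+3)/4\rfloor$, where the two vanishing ranges only just overlap via the injective boundary degree $i=n-2k+1$, and where one must rule out spurious torsion in the middle degrees before the ring structure can be read off.
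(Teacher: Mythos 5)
This theorem is cited directly from Wilking \cite{wilking2003torus}---the paper labels it a reformulation of (the proof of) his Lemma 6.2 and does not reproduce the argument---so the comparison is really against Wilking's own proof. Your sketch handles the sphere case essentially correctly: the Connectedness Lemma (applied to homology, so that Poincar\'e duality can be invoked cleanly) does make the two vanishing ranges cover $[1,n-1]$ precisely when $4k \le n+3$, and the high connectivity of the inclusion does force $\pi_1(M)=0$.

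For the projective space cases, however, there is a genuine gap, and it is exactly the step you flag as ``the main obstacle.'' Connectedness gives $H^i(M)\cong H^i(N)$ for $i \le n-2k$, and Poincar\'e duality on $M$ reflects this information to $i \ge 2k$; but when $n$ is near the boundary $4k-3$, there remain middle degrees that neither tool reaches, so the additive structure there is not ``matched in every degree'' as your sketch asserts, and nothing you write forces $\bar x^{n/d}$ to generate $H^n(M;\Z)$ rather than to be a proper multiple of a generator. The route you propose---non-degeneracy of the Poincar\'e pairing ``in concert with the partial ring isomorphism, following Wilking''---is circular here, because the extra ingredient Wilking actually uses at precisely this point is not the pairing but the Euler class of the normal bundle of $N \subset M$ (equivalently, the Poincar\'e dual of $[N]$). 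Via the Gysin sequence for the normal sphere bundle, combined with the Connectedness Lemma and Poincar\'e duality on both $N$ and $M$, one produces a class $e\in H^k(M;\Z)$ for which cup product $\cup\, e\colon H^i(M)\to H^{i+k}(M)$ is an isomorphism through the undetermined middle range. It is this $k$-periodicity of $H^*(M;\Z)$, compared against the known $d$-periodicity ($d=2$ or $4$) of $H^*(N;\Z)$, that fills in the missing degrees, pins down the ring structure as $\Z[\bar x]/(\bar x^{\,n/d+1})$, and yields the divisibility statements $2\mid k$ and $4\mid k$. Without the Euler-class/Gysin input, the argument as written does not close.
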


We now recall some useful results from~\cite{KKSS}. The first of these, Proposition 3.3 in~\cite{KKSS}, is a variation on the situation in Theorem~\ref{wilking}. Namely one assumes the existence of two closed, totally geodesic, submanifolds $N_{1}$ and $N_{2}$, one of which has periodic cohomology. By making additional assumptions on the codimension of $N_{1}$ and $N_{2}$, we conclude that $M^n$ also has periodic cohomology.

\begin{proposition}\label{3.3}\cite{KKSS}
Let $M^n$ be a closed, simply connected, positively curved Riemannian manifold. Suppose that $N_1^{n-k_1}$ and $N_2^{n-k_2}$ are two closed, totally geodesic submanifolds of $M$ with $k_1\leq k_2$. Let $R$ be $\Z$ or $\Z_p$, for some prime $p$. The following hold:
       \begin{enumerate}
       \item Suppose that $4k_1\leq n+3$ and $k_1+2k_2\leq n+1$. If $N_2$ is an $R$-cohomology sphere, then so is $M$.
       \item Suppose that $4k_1\leq n+3$ and $g + k_1 + 2k_2 \leq n + 3$ for some $g \geq 2$ that divides $k_1$. If $N_2$ has $g$-periodic $R$-cohomology, then so does $M$.
       \end{enumerate}
\end{proposition}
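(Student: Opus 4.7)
The natural approach is to pull cohomological information from $N_2$ (which carries the assumed cohomology type) up to $M$, using $N_1$ as a bridge to fill in intermediate degrees. \emph{First}, by the Connectedness Lemma (1), the inclusion $N_2 \hookrightarrow M$ is $(n - 2k_2 + 1)$-connected, so $H^j(M;R) \cong H^j(N_2;R)$ for $j \leq n - 2k_2$. In Part (1), this yields $H^j(M;R) = 0$ for $1 \leq j \leq n - 2k_2$, and Poincar\'e duality on $M$ then gives the same vanishing for $2k_2 \leq j \leq n-1$, leaving only the middle range $[n-2k_2+1,\,2k_2-1]$ to handle. In Part (2), the same isomorphism lets us pull back the $g$-periodicity generator of $N_2$ to a class $e \in H^g(M;R)$ whose cup product implements $g$-periodicity on $M$ in low degrees.

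\emph{Second}, apply Wilking's Periodicity Theorem (Theorem 2.1 of~\cite{wilking2003torus}) to the totally geodesic inclusion $N_1 \hookrightarrow M$: together with the hypothesis $4k_1 \leq n+3$ (which also makes Theorem \ref{wilking} available as a fallback), this produces a class $e_1 \in H^{k_1}(M;R)$ such that cup product with $e_1$ is an isomorphism $H^j(M;R) \to H^{j+k_1}(M;R)$ throughout a range wide enough to overlap both the low-degree region and (for Part (2)) the segment where $g$-periodicity has already been established.

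\emph{Third}, combine the two inputs. In Part (1), the hypothesis $k_1 + 2k_2 \leq n+1$ guarantees $k_1 \leq n - 2k_2 + 1$, so successively translating the vanishing interval $[1, n-2k_2]$ by $k_1$ via the isomorphism $\cup e_1$ (which overlaps with itself, since the interval has length at least $k_1$) covers the middle gap before it exits the range in which Wilking's periodicity holds; with Poincar\'e duality this forces $H^j(M;R) = 0$ for $1 \leq j \leq n-1$, so $M$ is an $R$-cohomology sphere. In Part (2), the divisibility $g \mid k_1$ ensures that the $k_1$-periodicity from $e_1$ is consistent with $g$-periodicity, and the sharper hypothesis $g + k_1 + 2k_2 \leq n+3$ is calibrated precisely so that iterating $\cup e$ along the Wilking periodicity range promotes the low-degree $g$-periodicity of $M$ to periodicity on the full cohomology ring.

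The main obstacle will be the compatibility step in Part (2): one must show that the pulled-back class $e$ and Wilking's class $e_1$ interact properly, so that $e^{k_1/g}$ realizes the same periodicity on $M$ as $e_1$ (up to a unit in $R$), thereby allowing the $k_1$-periodic isomorphisms produced by $\cup e_1$ to be refined to $g$-periodic isomorphisms produced by $\cup e$ in every remaining degree. Handling this requires careful Poincar\'e duality bookkeeping on both $N_2$ and $M$ combined with the connectedness estimates, and it is at exactly this point that the numerical hypotheses $4k_1 \leq n+3$ and $g + k_1 + 2k_2 \leq n+3$ are seen to be sharp.
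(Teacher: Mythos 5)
Your proposal correctly identifies the ingredients---the Connectedness Lemma applied to $N_2\hookrightarrow M$, Wilking's Periodicity Theorem applied to $N_1\hookrightarrow M$, and Poincar\'e duality on $M$---and the high-level strategy of pulling vanishing (resp.\ periodicity) in low degrees from $N_2$, reflecting it to high degrees via duality, and bridging the middle range by iterating the degree-$k_1$ periodicity from $N_1$ is the natural one. But the argument is left vague exactly where the numerical hypotheses are supposed to do their work. In Part (1), ``successively translating the vanishing interval $[1,n-2k_2]$ by $k_1$\dots\ covers the middle gap'' is asserted, not verified: $k_1+2k_2\le n+1$ only gives $n-2k_2\ge k_1-1$, an interval one degree shorter than a fundamental domain of the translation, so the first translate need not abut the original interval. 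Closing that boundary degree requires invoking the one-sided surjectivity at $i=k_1-1$ and injectivity at $i=n-2k_1+1$ in Wilking's range together with a second application of Poincar\'e duality, and it is precisely here that $4k_1\le n+3$ is used; as written the proof never confronts the critical case $k_1+2k_2=n+1$.

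More seriously, the ``main obstacle'' you flag for Part (2)---that one must show $e^{k_1/g}$ agrees with Wilking's class $e_1$ up to a unit---is a red herring and signals a misreading of how the two periodicities interact. Cup product is graded-commutative, so $(\cup e_1)\circ(\cup e)=\pm(\cup e)\circ(\cup e_1)$ holds with no hypotheses at all; this alone lets you transport the low-degree $g$-periodicity implemented by $\cup e$ across the middle range along the $\cup e_1$ isomorphisms, without any relation between $e$ and $e_1$. By presenting this nonexistent compatibility as the crux, the proposal puts the effort in the wrong place, while the genuine work---checking degree by degree that the Connectedness Lemma range, the Wilking surjectivity/injectivity range, and the duality reflection actually overlap for all admissible $(k_1,k_2,g)$, and that the resulting class $e\in H^g(M;R)$ is in fact a generator so that surjectivity holds from $H^0$---is waved off with ``calibrated precisely'' and ``careful bookkeeping.'' That is a plausible outline of a proof, not a proof, and the identified sticking point is not where the difficulty actually lies.
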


Since Theorem~\ref{wilking} and Proposition~\ref{3.3} involve simply-connected manifolds, we now recall Theorem 3.4 from~\cite{KKSS}, which shows that information on the fundamental group of the manifold and the cohomology ring of its universal cover can lead to information about homotopy equivalence.

\begin{theorem}\label{CtoHE}\cite{KKSS}
Let $M^n$ be a closed, smooth manifold, and let $\widetilde M^n$ denote the universal cover. The following hold:
	\begin{enumerate}
	\item If $\pi_1(M)$ is cyclic and $\widetilde M$ is a cohomology sphere, then $M$ is homotopy equivalent to $S^n$, $\RP^n$, or $S^n/\Z_l$ for some $l \geq 3$. In the last case, $n$ is odd; and 
	\item If $M$ is simply connected and has the cohomology of $\CP^{\frac n 2}$, then $M$ is homotopy equivalent to $\CP^{\frac n 2}$.
	\end{enumerate}
\end{theorem}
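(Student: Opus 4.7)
For part (1), the strategy is to first promote $\widetilde M$ to a genuine homotopy sphere, then use the classification of free cyclic group actions on spheres. Being simply connected with the homology of $S^n$, the universal cover $\widetilde M$ satisfies $\pi_n(\widetilde M) \cong H_n(\widetilde M) \cong \Z$ by Hurewicz; a generator yields a map $S^n \to \widetilde M$ inducing an integral homology isomorphism, and Whitehead's theorem makes this a homotopy equivalence $\widetilde M \simeq S^n$.

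The deck transformation action of $\Z_l = \pi_1(M)$ on $\widetilde M \simeq S^n$ is free. For $l=1$ we directly have $M \simeq S^n$. For $l \geq 3$ I would apply the Lefschetz fixed-point theorem: any fixed-point-free self-map of a homotopy $n$-sphere has Lefschetz number zero and hence degree $(-1)^{n+1}$. The resulting ``degree'' assignment $\Z_l \to \{\pm 1\}$ must be a homomorphism sending every nontrivial element to $(-1)^{n+1}$, which is only consistent in the trivial case $(-1)^{n+1} = +1$, forcing $n$ to be odd.

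To identify the homotopy type of $M$, I would use the classifying map $c\colon M \to B\Z_l$ of the universal cover. Since $M$ is homotopy equivalent to an $n$-dimensional CW complex, cellular approximation lets $c$ factor through the standard $n$-dimensional spherical space form model — the $n$-skeleton $\RP^n$ of $B\Z_2$ when $l=2$, or a standard $n$-dimensional lens space $L$ with $\pi_1(L) = \Z_l$ when $l \geq 3$ and $n$ is odd. The resulting map $f\colon M \to L$ is a $\pi_1$-isomorphism. Lifting to universal covers yields $\widetilde f\colon S^n \to S^n$, and from the identity $\deg \widetilde f = \deg f$ (derived by comparing the fundamental-class relations $p_*[\widetilde M] = l[M]$ for each $l$-fold covering) together with a suitable adjustment of the classifying map within its homotopy class, one concludes $\deg \widetilde f = \pm 1$. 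Whitehead's theorem then upgrades $f$ to a homotopy equivalence.

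For part (2), the argument is much more direct. A generator $x \in H^2(M; \Z) \cong \Z$ corresponds to a homotopy class of maps $g\colon M \to K(\Z, 2) = \CP^\infty$. Because $\CP^\infty$ has a CW structure with cells only in even dimensions and $M$ is $n$-dimensional (with $n$ even), cellular approximation lets $g$ factor through the $n$-skeleton $\CP^{n/2}$. The resulting $g\colon M \to \CP^{n/2}$ pulls the generator of $H^2(\CP^{n/2}; \Z)$ back to $x$; by naturality and the ring structure, $g^*$ sends $x^i$ to $x^i$ for each $i \leq n/2$, hence is an isomorphism on $H^*(-;\Z)$. Since both spaces are simply connected, Whitehead's theorem promotes $g$ to a homotopy equivalence.

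I expect the main obstacle to lie in part (1): ensuring that the classifying-map construction produces a lift of degree exactly $\pm 1$, rather than some higher multiple. Resolving this either requires carefully adjusting the initial classifying map within its homotopy class (using the action of self-homotopy equivalences of the target lens space on $[M, L]$) or invoking the classical Olum--Whitehead homotopy classification of spherical space forms with cyclic fundamental group. Part (2), by contrast, reduces neatly to cellular approximation and Whitehead's theorem.
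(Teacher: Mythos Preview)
The paper does not give its own proof of this theorem: it is quoted verbatim from \cite{KKSS} as a preliminary result and used as a black box in the proof of Theorem~A. There is therefore nothing in the present paper to compare your argument against.

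On the merits of your proposal itself: Part~(2) is correct and is the standard argument. In Part~(1), the Hurewicz--Whitehead identification $\widetilde M\simeq S^n$ and the Lefschetz degree argument forcing $n$ odd when $l\geq 3$ are both fine. The gap you flag is genuine: factoring the classifying map $M\to B\Z_l$ through a lens space $L$ via cellular approximation gives a $\pi_1$-isomorphism $f\colon M\to L$, but nothing in that construction controls the degree of the lift $\widetilde f\colon S^n\to S^n$, and adjusting by self-equivalences of $L$ only changes this degree by units of $\Z_l$, not arbitrarily. The clean way around this is to work with the first nontrivial $k$-invariant $\kappa(M)\in H^{n+1}(\Z_l;\Z)\cong\Z_l$. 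Orientability of $M$ (all deck transformations have degree $+1$ by your Lefschetz computation) forces $H_n(M;\Z)\cong\Z$, and the Serre spectral sequence of $S^n\to M\to B\Z_l$ then shows that $\kappa(M)$ is a unit in $\Z_l$. Since lens spaces realize every unit as a $k$-invariant, choose $L$ with $\kappa(L)=\kappa(M)$; a map $M\to L$ inducing isomorphisms on $\pi_1$ and matching $k$-invariants is then an isomorphism on $\pi_n$ as well, and Whitehead finishes.
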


 Another useful tool when working with fixed point sets of discrete actions, is the Borel Formula.

\begin{Borel}\label{borel}\cite{Bor}
    Let $\Z_p^r$ act smoothly on $M$, a Poincar\'e duality space, with fixed-point set component $F$. Then
	$$\textup{codim}(F \subseteq M) = \sum \textup{codim}(F \subseteq F')$$
where the sum runs over fixed-point set components $F'$ of corank one subgroups $\Z_p^{r-1} \subseteq \Z_p^r$ for which $\textup{dim}(F') > \textup{dim}(F)$.

These subgroups are the kernels of the irreducible subrepresentations. In particular, the number of pairwise inequivalent irreducible subrepresentations of the isotropy representation at a normal space to $F$ is at least $r$ if the action is effective. Moreover, equality holds only if the isotropy representation is equivalent to one of the form
	$$(\ep_1,\ldots,\ep_r) \mapsto \diag(\ep_1 I_{m_1}, \ldots, \ep_r I_{m_r})$$
where $\ep_i \in \{\pm 1\}$, $m_i > 0$ denote the multiplicities, $I_m$ denotes the identity matrix of rank $m$.
\end{Borel}

 The following useful observation from~\cite{KKSS}, is a direct consequence of the Borel formula.
\begin{observe}\label{observation} Let $F_j^{m_j}$ be the fixed-point set of a 
$\Z_2^{r-j}$-action by isometries on a closed, positively curved manifold, and suppose that $r-j\geq 2$. If the isotropy
representation has exactly $r-j$ irreducible subrepresentations, and if the fixed-point set component $F_{j+1}$ containing $F_j$ of some $\Z_2^{r-j-1}$ has the property that the codimension $k_j$ of the inclusion $F_j \subseteq F_{j+1}$ is minimal, then this inclusion is $\textup{dim}(F_j)$-connected.
\end{observe}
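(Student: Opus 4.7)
The plan is to combine the equality case of the Borel formula with Part (2) of the Connectedness Lemma; the heart of the argument is to produce an auxiliary totally geodesic submanifold $G \subseteq M$ whose intersection with $F_{j+1}$ is exactly $F_j$ and whose codimension is as small as possible, so that Part (2) yields $\dim(F_j)$-connectedness on the nose.

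First I would invoke the Borel formula at a point $x \in F_j$. Since the isotropy representation on $\nu_x F_j$ has exactly $r-j$ inequivalent irreducible subrepresentations, we are in the equality case, and in suitable coordinates on $\Z_2^{r-j}$ the representation takes the form
\[
(\ep_1, \ldots, \ep_{r-j}) \longmapsto \diag\bigl(\ep_1 I_{m_1}, \ldots, \ep_{r-j} I_{m_{r-j}}\bigr)
\]
for positive multiplicities $m_1, \ldots, m_{r-j}$. After relabeling I may assume $m_1 \leq m_2 \leq \cdots \leq m_{r-j}$, so that $m_1 = k_j$ and $F_{j+1}$ is the component containing $F_j$ of the fixed-point set of the corank-one subgroup $H_1 = \{\ep_1 = 1\}$. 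Reading off the diagonal form gives $\codim(F_{j+1} \subseteq M) = m_2 + \cdots + m_{r-j}$ and $\codim(F_j \subseteq M) = m_1 + \cdots + m_{r-j}$.

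Next I would introduce the auxiliary submanifold. Let $\iota_1 = (-1, 1, \ldots, 1) \in \Z_2^{r-j}$ and let $G$ be the component of its fixed-point set containing $F_j$. Since $\iota_1$ acts as $-I_{m_1}$ on the first block and trivially on the others, the normal form yields $\codim(G \subseteq M) = m_1 = k_j$. Moreover, because $\iota_1 \notin H_1$ and $H_1$ has index two in $\Z_2^{r-j}$, the subgroup $\langle \iota_1, H_1\rangle$ equals all of $\Z_2^{r-j}$, so the component of $G \cap F_{j+1}$ containing $F_j$ is precisely $F_j$ itself.

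Finally I would apply Part (2) of the Connectedness Lemma to $N_1 = G$ and $N_2 = F_{j+1}$ in $M$, with $k_1 = m_1$ and $k_2 = m_2 + \cdots + m_{r-j}$. The inequality $k_1 \leq k_2$ follows immediately from the minimality of $m_1$ combined with $r-j \geq 2$, and $k_1 + k_2 = \sum_l m_l = \codim(F_j \subseteq M) \leq n$ is automatic. The lemma then gives that $F_j \embedded F_{j+1}$ is $(n - k_1 - k_2)$-connected, which equals $n - \sum_l m_l = \dim(F_j)$, as desired. The only point that requires real care is the identification of connected components in the step $G \cap F_{j+1} = F_j$, but this is routine because all fixed-point components involved have been selected to contain $F_j$.
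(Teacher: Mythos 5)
Your proof is correct and matches the intended argument: the equality case of the Borel formula yields the diagonal normal form, the auxiliary totally geodesic submanifold $G$ (the fixed-point component of the complementary involution $\iota_1$) has codimension $m_1 = k_j$ and meets $F_{j+1}$ precisely in $F_j$, and Part~(2) of the Connectedness Lemma applied to $G$ and $F_{j+1}$ in $M$ gives $\bigl(n - \sum_i m_i\bigr) = \dim(F_j)$-connectedness. The one hypothesis you leave tacit is the effectivity of the $\Z_2^{r-j}$-action, which is needed to invoke the equality case of the Borel formula.
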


In the next lemma, we summarize the relevant results from the Codimension One and Codimension Two Lemmas in~\cite{KKSS}, used in the proof of Theorem~\ref{5.3}.

\begin{lemma}\label{1PLUS2}\cite{KKSS} Let $M^{n}$ be a closed, positively curved Riemannian manifold. Suppose $N^{n-k}$ is a closed, totally geodesic submanifold of $M$ of codimension one or two. Then the following hold:
\begin{enumerate}
    \item If $k=1$, then $N$ is homotopy equivalent to $S^{n-1}$ or $\RP^{n-1}$; or
    \item If $k=2$ and $N$ is $(n-2)$-connected, then $N$ is homotopy equivalent to $S^{n-2}$, $\RP^{n-2}$, $\CP^{\frac{n-2}{2}}$, or a lens space.
\end{enumerate}
In particular, if $N$ is homotopy equivalent to one of $S^{n-k}$, $\RP^{n-k}$, $\CP^{\frac{n-k}{2}}$, or a lens space, then $M$ is homotopy equivalent to one of $S^{n}$, $\RP^{n}$, $\CP^{\frac{n}{2}}$, or a lens space, respectively.
    
\end{lemma}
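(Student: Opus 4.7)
The plan is to apply the Connectedness Lemma (CL) to the inclusion $N \hookrightarrow M$ to transfer topological information between the two, and then combine this with positive-curvature constraints on fundamental groups (Synge's theorem), together with Wilking's Theorem~\ref{wilking}, Proposition~\ref{3.3}, and Theorem~\ref{CtoHE}, to obtain the listed homotopy equivalences.

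For part (1), $k=1$: I would apply the CL to obtain that $N \hookrightarrow M$ is $(n-1)$-connected, so that $N$ and $M$ agree in integral cohomology through degree $n-2$ and on the fundamental group. Since $N^{n-1}$ is itself closed and positively curved, Synge's theorem constrains $\pi_1(N)$. Combining this fundamental-group information with Poincar\'e duality on $N$ and the high connectivity of the inclusion identifies $H^*(N;\Z)$ as that of $S^{n-1}$ up to torsion, which standard arguments upgrade to the homotopy classification $N \simeq S^{n-1}$ or $\RP^{n-1}$.

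For part (2), $k=2$ with the inclusion $(n-2)$-connected (a strengthening of what CL alone provides for codimension two, which in practice will come from Observation~\ref{observation}): I would carry out an analogous Poincar\'e duality argument on $N^{n-2}$, combined with the given $(n-2)$-connectedness, to lift the cohomology-ring question to the universal cover $\tilde N$. Wilking's Theorem~\ref{wilking} (whose codimension bound $k\leq(n+3)/4$ holds for $k=2$ and $n\geq 5$) together with Proposition~\ref{3.3} then pins down $\tilde N$ as having the cohomology ring of $S^{n-2}$ or $\CP^{(n-2)/2}$. Theorem~\ref{CtoHE}, together with Synge-imposed cyclicity of $\pi_1(N)$, yields the four listed homotopy types.

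For the ``In particular'' conclusion, assume $N$ is already homotopy equivalent to one of the listed model types. The CL provides the $(n-2k+1)$-connectedness of the inclusion, and the codimension bound $k\leq 2$ fits the hypothesis of Wilking's Theorem~\ref{wilking}. Passing to the universal cover $\tilde M$, which inherits a lifted copy of $\tilde N$ as a totally geodesic submanifold with the cohomology of a sphere or of $\CP^{(n-k)/2}$, an application of Theorem~\ref{wilking} identifies the cohomology ring of $\tilde M$, and Theorem~\ref{CtoHE} upgrades this to the desired homotopy equivalence $M \simeq S^n$, $\RP^n$, $\CP^{n/2}$, or a lens space, with $\pi_1(M)\cong\pi_1(N)$ governing the case distinction. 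The main obstacle I anticipate is the codim 1 case of part (1), where Wilking's Theorem~\ref{wilking} cannot be used to classify $N$ directly (its hypothesis requires $N$ to already have the cohomology of a sphere or projective space), so the argument must instead proceed by directly combining $(n-1)$-connectedness with Poincar\'e duality and Synge-type constraints, with care taken across the orientable and non-orientable cases.
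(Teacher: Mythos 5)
This lemma is not proved in the paper: it is explicitly quoted from~\cite{KKSS} as a ``summary of the relevant results from the Codimension One and Codimension Two Lemmas,'' so there is no internal proof to compare against. Evaluating your proposal on its own merits, the ``In particular'' clause is handled essentially correctly: the Connectedness Lemma gives an inclusion that is at least $2$-connected, so $\pi_1(N)\cong\pi_1(M)$, and after passing to universal covers one may feed $\widetilde N\embedded\widetilde M$ into Theorem~\ref{wilking} and then Theorem~\ref{CtoHE}. That part of the plan is fine.

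There is, however, a genuine gap in Parts (1) and (2), where you need to determine the homotopy type of $N$ itself rather than transfer it to $M$. You propose that Theorem~\ref{wilking} ``together with Proposition~\ref{3.3} then pins down $\widetilde N$ as having the cohomology ring of $S^{n-2}$ or $\CP^{(n-2)/2}$.'' Both of those results go in the opposite direction: they take as \emph{input} a totally geodesic submanifold with known cohomology and conclude something about the \emph{ambient} manifold. Neither can manufacture the cohomology ring of $N$ or $\widetilde N$ out of nothing, and your argument supplies no other mechanism that does so. The missing ingredient is Wilking's Periodicity Lemma (which this paper does not list among its preliminaries, precisely because the lemma is cited rather than re-proved): the $(n-2k+1)$-connectedness of $N^{n-k}\embedded M^n$, combined with Poincar\'e duality on \emph{both} $M$ and $N$, produces a degree-$k$ class inducing $k$-periodicity of the cohomology, and it is this periodicity --- $1$-periodicity when $k=1$, $2$-periodicity when $k=2$ and the inclusion is $(n-2)$-connected --- that forces $N$ to be a cohomology $S^{n-k}$, $\RP^{n-k}$, $\CP^{(n-k)/2}$, or lens space. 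Your Part (1) gestures at ``Poincar\'e duality on $N$ and the high connectivity of the inclusion'' and then defers to ``standard arguments,'' but the load-bearing step is exactly this periodicity argument, and it is not identified. Without it, Parts (1) and (2) are incomplete.
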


 Finally, we recall one of the main results of~\cite{KKSS}, Theorem 5.3 (see also Theorem B in~\cite{KKSS}). It classifies up to homotopy equivalence, closed, positively curved manifolds with a $\Z_2$-torus action of rank approximately half the dimension of $M$. Before we state it, we set the following notation: let $B=\{4,12\}\cup\{3,7,11,23\}$ and define $\delta_B(n)$ to be one for $n\in B$ and zero otherwise.
\begin{theorem}\label{5.3}\cite{KKSS}
Assume $\mathbb{Z}_{2}^{r}$ acts effectively and isometrically on a closed, positively curved manifold $M^{n}$ with a non-empty fixed-point set with $n\geq2$. If $$r>\frac{n}{2}+\delta_B(n),$$ then one of the following holds:
\begin{enumerate}
    \item $M$ is homotopy equivalent to $S^n$ or $\RP^n$.
    \item $M$ is homotopy equivalent to $\CP^{\frac{n}{2}}$, $r=\frac{n}{2}+1$ and $n\notin B$.
    \item $M$ is homotopy equivalent to $S^{n}/\mathbb{Z}_l$ for some $l\geq3$, $r=\frac{n+1}{2}$, and $n\notin B$.
\end{enumerate}
    
\end{theorem}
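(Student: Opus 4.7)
The plan is to argue by strong induction on $n \geq 2$, and to aim at each step to locate a closed, totally geodesic, positively curved submanifold $N \subset M$ of codimension $k \in \{1,2\}$ to which the inductive hypothesis applies. The four possible conclusions --- $S^n$, $\RP^n$, $\CP^{n/2}$, or a lens space --- are exactly the homotopy types preserved under the codimension-one and codimension-two ``promotions'' of Lemma~\ref{1PLUS2}, so once $N$ is classified by the inductive hypothesis, the classification of $M$ follows immediately. The base case handles those small $n$ --- in particular, $n \in B = \{3,4,7,11,12,23\}$, where the $+1$ correction $\delta_B(n)$ in the rank bound is forced --- by direct appeal to Theorem~A of~\cite{KKSS} (which caps $r$ at $n$) combined with hand verification in low dimensions.

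For the induction step, fix $p \in M^{\Z_2^r}$ and consider the isotropy representation $\rho : \Z_2^r \hookrightarrow \Z_2^n$ at $p$; the Hamming weight $|\rho(\iota)|$ equals the codimension in $M$ of the fixed-point component of $\iota$ through $p$. I would split on the minimum non-trivial Hamming weight $k_0 = \min_{\iota \neq 0} |\rho(\iota)|$. If $k_0 \leq 2$, choose a realizing involution $\sigma$, set $N := \Fix(\sigma, M)_p$, verify that the residual $\Z_2^{r-1}$-action on $N^{n-k_0}$ satisfies $r - 1 > (n - k_0)/2 + \delta_B(n - k_0)$ (which holds for $k_0 \in \{1,2\}$ outside a finite list of $(n, k_0)$ pairs absorbed into the $\delta_B$ correction), apply induction to $N$, and then Lemma~\ref{1PLUS2}. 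If $k_0 \geq 3$, I would pick any $\sigma_1$ realizing $k_0$ and descend to $N_1 := \Fix(\sigma_1, M)_p$, iterating the same analysis on $N_1$; the goal is to force the chain to terminate at a submanifold of codimension $\leq 2$ in $M$ whose cohomology is controlled via Theorem~\ref{wilking} or Proposition~\ref{3.3}. Theorem~\ref{1.2}, together with the Borel formula and Observation~\ref{observation} (for connectedness of intermediate inclusions), is what ensures that the chain terminates before the residual rank drops below the induction threshold; the exact rank refinements in conclusions (2) and (3) are read off from the Borel count of irreducible subrepresentations at $p$ in the extremal cases, and non-simply-connectedness is handled by passing to the universal cover and applying Theorem~\ref{CtoHE}.

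The main obstacle I anticipate is precisely the combinatorial bookkeeping in the $k_0 \geq 3$ case: proving that the inequality $r > n/2 + \delta_B(n)$ is sharp enough to guarantee the descent chain terminates in a codimension-$\leq 2$ submanifold carrying all the topological control (periodic cohomology, simple connectedness, compatible totally geodesic pair) needed to invoke Proposition~\ref{3.3} and Theorem~\ref{CtoHE}. The exceptional finite set $B$ is exactly what emerges from analyzing where this guarantee becomes tight, and the additive $\delta_B(n)$ correction is the sharp arithmetic shift needed to close the residual gaps at those small dimensions. A subtle secondary obstacle is the $\CP^{n/2}$ case: to exclude it in favor of a sphere or lens space, one must match the rank count $r = n/2 + 1$ against the isotropy data at $p$ using the Borel formula, which requires tracking not just the codimension chain but also the exact multiplicities of the characters $\chi_1, \ldots, \chi_n$ of $\rho$.
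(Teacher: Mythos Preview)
This statement is not proved in the present paper. Theorem~\ref{5.3} is quoted verbatim from~\cite{KKSS} (it is their Theorem~5.3, equivalently Theorem~B) and appears here in the Preliminaries section solely as a black box. Its only role in this paper is to serve as the anchor for the induction in the proof of Theorem~A: for $n\leq126$ the bound $r\geq.3992n+1.5\log_{2}n+2.772$ exceeds $\frac{n}{2}+\delta_B(n)$, so Theorem~\ref{5.3} applies directly and establishes the base case. There is therefore nothing in the paper to compare your proposal against.

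That said, your sketch is a plausible outline of how one might expect the argument in~\cite{KKSS} to proceed, and the ingredients you invoke (isotropy representation at a fixed point, Hamming-weight/codimension dictionary, Borel formula, the Codimension One and Two Lemmas, the Connectedness Lemma, passage to the universal cover via Theorem~\ref{CtoHE}) are exactly the tools that paper develops. But you should be aware that the $k_0\geq3$ branch as you have written it is not a proof: ``iterate and hope the chain terminates at codimension $\leq2$'' is the entire difficulty, and Theorem~\ref{1.2} with $\delta=\tfrac14$ or $\tfrac13$ gives bounds of the form $r\leq cn+O(\log n)$ with $c<\tfrac12$, which is vacuous under the hypothesis $r>\tfrac{n}{2}$. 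The actual work in~\cite{KKSS} at this threshold uses finer codimension lemmas (their Codimension Three and Four Lemmas) and a case analysis that you have not supplied; your proposal identifies the right shape but does not close the gap.
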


\section{The proof of Theorem A}\label{s3} We begin by establishing an important tool for the proof of~ \hyperref[A]{Theorem A}, which can be used to find two distinct involutions fixing closed submanifolds of sufficiently small codimension.

\begin{lemma}\label{mainlemma} Let $M^n$, $n\geq15$, be an $n$-dimensional Riemannian manifold admitting an effective, isometric $\mathbb{Z}_{2}^{r}$-action with a fixed point $x$. If $r\geq.3992n+1.5\log_{2}n+2.772$, then
\begin{enumerate}
\item there exists an involution $\iota_{1}\in\mathbb{Z}_{2}^{r}$ such that $\textup{codim}(M_{x}^{\iota_{1}})\leq\frac{n+3}{4}$; and
 \item there exists an involution $\iota_{2}\in\mathbb{Z}_{2}^{r}$ such that $\textup{codim}(M_{x}^{\iota_{2}})\leq\frac{n+1}{3}$, where
 
 $\iota_{2}\notin\langle\iota_{1}\rangle.$
\end{enumerate}

  \end{lemma}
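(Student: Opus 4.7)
The plan is to prove both parts by contradiction using the error-correcting-code bound Theorem~\ref{1.2}, applied to the isotropy representation at $x$. Since $x$ is a fixed point, the differential at $x$ gives a homomorphism $\rho\colon\mathbb{Z}_2^r\to\gO(T_xM)$; the commuting involutions $\rho(\iota)$ can be simultaneously diagonalized, so after choosing a common eigenbasis the image of $\rho$ lies in the diagonal elementary abelian subgroup $\mathbb{Z}_2^n\subseteq\gO(T_xM)$, and effectiveness forces $\rho$ to be injective (an isometry fixing $x$ and acting trivially on $T_xM$ is trivial on the connected manifold $M$). For any nontrivial $\iota\in\mathbb{Z}_2^r$, the codimension of the fixed-point-set component $M_x^{\iota}$ equals the Hamming weight $|\rho(\iota)|$.

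For part (1), suppose no $\iota_1$ as claimed exists; then every nontrivial $\iota\in\mathbb{Z}_2^r$ satisfies $|\rho(\iota)|>\tfrac{n+3}{4}\geq\tfrac{n}{4}$, so Theorem~\ref{1.2} applies with $\delta=\tfrac14$. A direct numerical computation using $J(\tfrac14)=\tfrac12(1-1/\sqrt2)\approx 0.14645$ yields $1-H(J(\tfrac14))\approx 0.3991$ and $0.5\log_2\bigl(\tfrac{1-J(1/4)}{J(1/4)}\bigr)+1.5\approx 2.7715$, so the theorem produces
\[ r\leq 0.3991\,n+1.5\log_2 n+2.7715, \]
contradicting the standing hypothesis $r\geq 0.3992\,n+1.5\log_2 n+2.772$.

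For part (2), with $\iota_1$ produced by (1), extend $\iota_1$ to an $\mathbb{F}_2$-basis $\{\iota_1,e_2,\ldots,e_r\}$ of $\mathbb{Z}_2^r$ and set $C=\langle e_2,\ldots,e_r\rangle$, a rank-$(r-1)$ subgroup with $C\cap\langle\iota_1\rangle=\{1\}$; thus every nontrivial element of $C$ lies outside $\langle\iota_1\rangle$. Suppose no $\iota_2\notin\langle\iota_1\rangle$ satisfies $\textup{codim}(M_x^{\iota_2})\leq\tfrac{n+1}{3}$. Then every nontrivial $\iota\in C$ has $|\rho(\iota)|>\tfrac{n+1}{3}\geq\tfrac{n}{3}$, so Theorem~\ref{1.2} applied to the injective restriction $\rho|_C\colon C\to\mathbb{Z}_2^n$ with $\delta=\tfrac13$ and $J(\tfrac13)=\tfrac12(1-1/\sqrt3)\approx 0.21132$ gives
\[ r-1\leq 0.2559\,n+1.5\log_2 n+2.4501, \]
which together with the hypothesis forces $0.1433\,n\leq 0.6781$, contradicting $n\geq 15$.

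The main obstacle is the tightness of part (1): the coefficient $0.3992$ in the hypothesis is chosen to barely exceed $1-H(J(\tfrac14))$, so the numerical constants entering Theorem~\ref{1.2} must be computed with enough precision to guarantee a genuine strict inequality for all $n\geq 15$. Part (2), by contrast, is comfortable: one has a linear slack of roughly $0.14\,n$ on the right-hand side once the complement $C$ has been identified, and the only care needed is to ensure the subgroup $C$ avoids $\iota_1$ so that the Hamming-weight hypothesis holds for every nontrivial codeword.
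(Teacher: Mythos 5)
Your proof is correct, and Part (1) matches the paper's argument exactly: assume all nontrivial involutions have fixed-point codimension exceeding $\tfrac{n+3}{4}$, apply Theorem~\ref{1.2} with $\delta=\tfrac14$, and note that the resulting bound $r\leq 0.3991n+1.5\log_2 n+2.7715$ contradicts the hypothesis $r\geq 0.3992n+1.5\log_2 n+2.772$.

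For Part (2), your argument is actually more careful than what appears in the paper. The paper assumes the codimension bound fails for all nontrivial $\iota_2\notin\langle\iota_1\rangle$, and then asserts ``so we may apply Theorem~\ref{1.2} with $\delta=\tfrac13$'' to obtain $r\leq 0.256n+1.5\log_2 n+2.4495$. But Theorem~\ref{1.2} requires the Hamming weight bound for \emph{all} nontrivial elements of the domain, and $\iota_1$ itself is exempt from the codimension-$\tfrac{n+1}{3}$ assumption (indeed, from Part (1) we only know $|\rho(\iota_1)|\leq\tfrac{n+3}{4}$, which can be below $\tfrac n3$). The paper does not say how it handles this, and as written the application of Theorem~\ref{1.2} to all of $\Z_2^r$ is not justified. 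You resolve this cleanly by restricting $\rho$ to a rank-$(r-1)$ complement $C$ of $\langle\iota_1\rangle$; every nontrivial element of $C$ then lies outside $\langle\iota_1\rangle$ and satisfies the weight bound, so Theorem~\ref{1.2} gives $r-1\leq 0.2559n+1.5\log_2 n+2.4501$. You lose a harmless additive constant of $1$ relative to the paper's claimed bound, but you gain a gap-free argument; and since Part (2) has slack of roughly $0.14n$ in the contradiction, the loss is irrelevant. (An alternative way to get the paper's bound $r\leq\cdots$ verbatim would be to modify $\rho$ only on $\iota_1$, sending it to some high-weight vector not already in the image — Theorem~\ref{1.2} permits non-linear injections — but your complement-subgroup route is simpler and is what the proof really needs.)
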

  
\begin{proof}
Consider the isotropy representation $\rho:\mathbb{Z}_{2}^{r}\rightarrow\mathbb{Z}_{2}^{n}$ at $x$. Recall that the Hamming weight of the image of an involution equals the codimension of its fixed-point set.

To prove Part (1), we assume that $\codim(M_{x}^{\iota_{1}})>\frac{n+3}{4}$ for all non-trivial $\iota_{1}\in\mathbb{Z}_{2}^{r}$, to derive a contradiction. By assumption, $|\rho(\iota_{1})|\geq\frac{n}{4}$ for all non-trivial $\iota_{1}\in\mathbb{Z}_{2}^{r}$, so we may apply Theorem~\ref{1.2} with $\delta=\frac{1}{4}$. This, together with a calculation, gives that
\begin{alignat*}{2}
r &\le(1-H(J(\frac{1}{4})))n+1.5\log_{2}n+0.5\log_{2}(\frac{1-J(\frac{1}{4})}{J(\frac{1}{4})})+1.5  \\
&\le0.3991n+1.5\log_{2}n+2.7715.
\end{alignat*}

\noindent Combining this inequality with our original hypothesis on $r$, we conclude that $n<0$, giving us a contradiction. This completes the proof of Part (1).

To prove Part (2), we assume that $\codim(M_{x}^{\iota_{2}})>\frac{n+1}{3}$ for all non-trivial $\iota_{2}\in\mathbb{Z}_{2}^{r}\,\setminus\,\langle\iota_{1}\rangle$ to derive a contradiction. By assumption, $|\rho(\iota_{2})|\geq\frac{n}{3}$ for all non-trivial $\iota_{2}\in\mathbb{Z}_{2}^{r}\,\setminus\,\langle\iota_{1}\rangle$, so we may apply Theorem~\ref{1.2} with $\delta=\frac{1}{3}$. A computation gives us that
$$r\leq0.256n+1.5\log_{2}n+2.4495.$$

\noindent Combining this inequality with our original assumption on $r$, once again we see that $n<0$, a contradiction. This completes the proof of Part (2) and of the lemma.
\end{proof}

\begin{remark}\label{rmk2}In the situation of Lemma~\ref{mainlemma}, if $\pi_{1}(M)=0$, $N_{1}^{n-k_{1}}\subset M_{x}^{\iota_{1}}$, and $N_{2}^{n-k_{2}}\subset M_{x}^{\iota_{2}}$, then $k_{1}$ and $k_{2}$ satisfy the hypothesis of Parts (1) and (2) of Proposition~\ref{3.3}.
    
\end{remark}

We are now ready to prove~\hyperref[A]{Theorem A}, which we restate for the convenience of the reader.

\begin{theorem}Assume $\mathbb{Z}_{2}^{r}$ acts effectively and isometrically on a closed, positively curved manifold $M^{n}$ with a non-empty fixed-point set with $n\geq15$. If  $$r\geq.3992n+1.5\log_{2}n+2.772,$$ then $M$ is homotopy equivalent to $S^n$, $\RP^n$, $\CP^{n/2}$, or a lens space.  

\end{theorem}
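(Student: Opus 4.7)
The plan is to apply Lemma~\ref{mainlemma} to produce two totally geodesic submanifolds of controlled codimension, then to feed one of them into Theorem~\ref{wilking} (or Proposition~\ref{3.3}) to transfer a nice cohomology type up to $M$, all inside a strong induction on $n$ whose base case is handled directly by Theorem~\ref{5.3}.

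I begin by applying Lemma~\ref{mainlemma} at the fixed point $x$, producing involutions $\iota_1,\iota_2\in\mathbb{Z}_2^r$ (with $\iota_2\notin\langle\iota_1\rangle$) whose fixed-point components $N_1,N_2\ni x$ satisfy $\codim(N_1)=k_1\le\tfrac{n+3}{4}$ and $\codim(N_2)=k_2\le\tfrac{n+1}{3}$. After relabelling so that $k_1\le k_2$, both codimension bounds continue to hold (since $\tfrac{n+3}{4}<\tfrac{n+1}{3}$ for all $n\ge 15$). By Remark~\ref{rmk2}, the pair $(k_1,k_2)$ then meets the hypotheses of both parts of Proposition~\ref{3.3}, while $k_1$ alone meets the codimension bound of Theorem~\ref{wilking}; each $N_i$ is closed, totally geodesic, and positively curved.

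The induction runs on $n\ge 15$, with Theorem~\ref{5.3} serving as the base case: the remark following Corollary~B observes that for $n\le 126$ our lower bound on $r$ already implies $r>\tfrac{n}{2}+\delta_B(n)$, so Theorem~\ref{5.3} gives the conclusion outright. For the inductive step $n\ge 127$, I first reduce to the simply connected case by lifting the action to the universal cover $\widetilde M$, which is closed (by Myers' theorem) and carries an induced, effective, isometric $\mathbb{Z}_2^r$-action of the same rank and dimension with a fixed point (a lift of $x$). Assuming henceforth that $M$ is simply connected, the~\hyperref[CL]{Connectedness Lemma} gives $\pi_1(N_1)=\pi_1(M)=0$, so the inductive hypothesis forces $N_1$ to be homotopy equivalent to $S^{\dim N_1}$ or $\CP^{\dim N_1/2}$ (the other two listed options fail to be simply connected). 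Theorem~\ref{wilking} then promotes this to $M$, giving $M$ the cohomology of $S^n$ or $\CP^{n/2}$. For the original manifold, $\pi_1(M)$ coincides with $\pi_1(N_1)$ by the~\hyperref[CL]{Connectedness Lemma} and so, inductively, is cyclic; Theorem~\ref{CtoHE} then reads off the stated homotopy type of $M$.

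The main obstacle is that applying the inductive hypothesis to $N_1$ requires the effective $\mathbb{Z}_2$-torus rank on $N_1$ to be at least $0.3992(\dim N_1)+1.5\log_2(\dim N_1)+2.772$. This effective rank is $r-c$, where $c$ is the rank of the subgroup of $\mathbb{Z}_2^r$ acting trivially on $N_1$, and $c$ can in principle be as large as $k_1$; when $k_1$ is close to its upper bound $\tfrac{n+3}{4}$, the naive estimate falls short of the needed threshold. To close the gap I plan to use the Borel formula and Observation~\ref{observation} to choose $\iota_1$ whose normal representation at $x$ contains as few irreducible subrepresentations as possible (ideally one), forcing $c$ down; the residual small-codimension cases $k_1\le 2$, where this strategy is inapplicable, are handled directly by Lemma~\ref{1PLUS2}. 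A parallel argument with $N_2$ via Proposition~\ref{3.3}(1) or (2) should absorb the complex-projective-space subcase when the parity of $k_1$ creates a mismatch for Theorem~\ref{wilking}.
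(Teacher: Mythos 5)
Your high-level plan (anchor at Theorem~\ref{5.3} for $n\le 126$; for $n\ge 127$ produce $N_1,N_2$ via Lemma~\ref{mainlemma}, apply the inductive hypothesis to one of them, and push its cohomology type up) follows the paper's outline, but the inductive step as you have set it up has two genuine gaps. The first is the ``effective rank'' problem you flag. Your Borel-formula workaround is left undeveloped and is not what closes the gap; the paper instead chooses each $N_i$ to have \emph{maximal} dimension among fixed-point components through $x$. If some nontrivial $g$ fixed $N_1$ pointwise, the component of $\Fix(M;g)$ through $x$ would contain $N_1$, hence equal $N_1$ by maximality; if also $g\neq\iota_1$, then $g\iota_1$ would act as the identity on $T_xN_1\oplus\nu_xN_1=T_xM$ and hence on $M$ (an isometry of a connected manifold is determined by its differential at a single point), contradicting effectiveness. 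So the pointwise stabilizer is exactly $\langle\iota_1\rangle$ and $\Z_2^{r-1}$ acts effectively on $N_1$ (and likewise on $N_2$). You should replace the Borel-formula plan by this maximality argument.

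The second gap concerns which submanifold you induct on. You feed $N_1$ into the inductive hypothesis, but the needed inequality $r-1\ge .3992(n-k)+1.5\log_2(n-k)+2.772$ fails when $k\le 2$ (since $0.3992\cdot 2<1$), so your induction does not run on $N_1$ when $k_1\le 2$, even though $k_2$ may be large. The paper instead inducts on $N_2$ and splits on $k_2$: when $k_2\ge 3$ the inequality holds, and because $k_2$ may exceed $\tfrac{n+3}{4}$ one cannot invoke Theorem~\ref{wilking} (as you do); the paper uses Proposition~\ref{3.3}, which takes \emph{both} $N_1$ and $N_2$ as input under the hypotheses $4k_1\le n+3$ and $k_1+2k_2\le n+1$ (checked via Remark~\ref{rmk2}) to transfer the cohomology of $\bar N_2$ to $\widetilde M$. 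When $k_2\le 2$ (hence $k_1\le k_2\le 2$), the paper runs a three-way subcase analysis --- $k_i=1$; $k_1=k_2=2$ with $N_1,N_2$ transverse, where Observation~\ref{observation} supplies the connectivity hypothesis of Lemma~\ref{1PLUS2}(2); and $k_1=k_2=2$ nontransverse, where $\codim(N_1\cap N_2)=3$ forces a codimension-one inclusion and a separate lift-and-recognize argument. Your proposal's appeal to Lemma~\ref{1PLUS2} ``directly'' does not reproduce this, and in particular the case $k_1\le 2<3\le k_2$ is left unhandled.
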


\begin{proof} The proof is by total induction on the dimension. The anchor of the induction is for dimensions $\leq126$.

For $n\leq126$, we can see that $.3992n+1.5\log_{2}n+2.772>\frac{n}{2}+\delta_B(n)$, and thus our anchor holds by Theorem~\ref{5.3}. So, we assume $n>126$. We suppose the result holds for all dimensions less than $n$ and show it also holds for dimension $n$. Let $x$ be a fixed point of the $\Z_2^r$-action. 

By Part (1) of Lemma~\ref{mainlemma}, there exists a non-trivial involution $\iota_{1}\in\mathbb{Z}_{2}^{r}$ and $x\in N_{1}^{n-k_{1}}\subset\Fix(M;\iota_{1})$ such that $k_{1}\leq\frac{n+3}{4}$ and $N_{1}$ is of maximal dimension. By maximality of $N_{1}$, the kernel of the $\Z_2^r$-action is $\Z_2$ and hence $N_{1}$ admits an effective $\Z_2^{r-1}$-action with $x\in N_{1}$.

By Part (2) of Lemma~\ref{mainlemma}, there exists a second non-trivial involution $\iota_{2}\in\mathbb{Z}_{2}^{r}$ such that $\iota_2\notin\langle\iota_{1}\rangle$ and $x\in N_{2}^{n-k_{2}}\subset\Fix(M;\iota_{2})$ such that $k_{2}\leq\frac{n+1}{3}$ and $N_{2}$ is of maximal dimension. By maximality of $N_{2}$, the kernel of the $\Z_2^r$-action is $\Z_2$ and hence $N_{2}$ admits an effective $\Z_2^{r-1}$-action with $x\in N_{2}$. Without loss of generality, we assume $k_{1}\leq k_{2}$. We now proceed to argue according to the codimension of $N_{2}$. We have two cases: Case 1, where $k_{2}$ is greater than or equal to 3 and Case 2, where $k_{2}$ is less than or equal to 2. We begin with Case 1.

\noindent \textbf{Case 1: \boldmath{$k_2\geq3$}.} Recall that $\mathbb{Z}_{2}^{r-1}$ acts effectively and isometrically on $N_{2}^{n-k_{2}}$.

Using the fact that $r\geq.3992n+1.5\log_{2}n+2.772$ and $k_{2}\geq3$, a computation gives
$$r-1\geq.3992(n-k_{2})+1.5\log_{2}(n-k_{2})+2.772.$$  

\noindent Hence, the induction hypothesis is satisfied and it then follows that $N_2$ is homotopy equivalent to $S^{n-k_{2}}$, $\RP^{n-k_{2}}$, $\CP^{\frac{n-k_{2}}{2}}$, or a lens space. We lift to the universal cover of $M$ via the covering map $\pi:\widetilde{M}\rightarrow M$. Let $\bar{N}_{2}$ be the lift of $N_{2}$. Since $k_{2}\leq\frac{n+1}{3}$, it is easy to see by the~\hyperref[CL]{Connectedness Lemma}, that both $N_{2}$ and $\bar{N}_{2}$ are at least 5-connected for $n\geq126$.  So $\bar{N}_{2}$ is simply-connected and therefore is the universal cover of $N_{2}$. By Remark~\ref{rmk2}, we see that the codimension of $N_{1}$ and $N_{2}$ satisfy the hypothesis of Proposition~\ref{3.3}.

In the case where $\bar{N}_2$ is a cohomology sphere, then by Part (1) of Proposition~\ref{3.3} so is $\widetilde{M}$. Recall that $N_{2}$ is at least 5-connected and since $\pi_{1}(N_2)$ is cyclic in this case, it follows that $\pi_{1}(M)$ is cyclic. By Part (1) of Theorem~\ref{CtoHE}, $M$ is homotopy equivalent to $S^n$, $\RP^n$, or a lens space. 

If $\bar{N}_2$ is a cohomology complex projective space, then by Part (2) of Proposition~\ref{3.3} with $g=2$, so is $\widetilde{M}$. Since $\pi_{1}(N_2)=0$ in this case, it follows as in the previous case that $\pi_{1}(M)=0$. By Part (2) of Theorem~\ref{CtoHE}, we then conclude that $M$ is homotopy equivalent to $\CP^{\frac{n}{2}}$.

\noindent\textbf{Case 2: \boldmath{$k_2\leq2$}.} Since $k_1\leq k_2\leq2$, we have two subcases to consider: Case 2.a, where one of $k_1$ or $k_2$ is equal to one, Case 2.b, where there are two irreducible representations and $k_1=k_2=2$. We break Case 2.b into two further subcases: Case 2.b.i, where the submanifolds $N_1$ and $N_2$ intersect transversely and Case 2.b.ii, where the submanifolds $N_1$ and $N_2$ intersect non-transversely.

\noindent\textbf{Case 2.a: \boldmath{$k_{i}=1$} for some $i\in\{1,2\}$.} Part (1) of Lemma~\ref{1PLUS2} gives us that $M$ is homotopy equivalent to $S^{n}$ or $\RP^{n}$.

\noindent\textbf{Case 2.b.i: \boldmath{$k_1=k_2=2$} and $N_1$ and $N_2$ intersect transversely.} By Observation~\ref{observation}, $N_{1}\cap N_{2}$ is $\dim(N_{1}\cap N_{2})$-connected in $N_2$. It then follows from Part (2) of Lemma~\ref{1PLUS2} that $M$ is homotopy equivalent to $S^{n}$, $\RP^{n}$, $\CP^{\frac{n}{2}}$ or a lens space.

\noindent\textbf{Case 2.b.ii\boldmath{: $k_1=k_2=2$} and the intersection is not transverse.} Since $N_{1}$ and $N_{2}$ intersect non-transversely, then the codimension of $N_{1}\cap N_{2}$ is strictly less than $k_{1}+k_{2}=4$. In particular, $\codim(N_{1}\cap N_{2})=3$. It follows from Part (1) of Lemma~\ref{1PLUS2} that $N_{2}$ is homotopy equivalent to $S^{n-k_{2}}$ or $\RP^{n-k_{2}}$. In the case where $N_{2}$ is homotopy equivalent to $\RP^{n-k_{2}}$, we lift to the universal cover first, as we did in Case 1 and then use Part (1) of Theorem~\ref{CtoHE} to conclude that $M$ is homotopy equivalent to $\RP^{n}$ or a lens space. In the case where $N_{2}^{n-k_{2}}$ is homotopy equivalent to $S^{n-k_{2}}$, since $k_{2}=2\leq\frac{n+3}{4}$, we use Theorem~\ref{wilking} to conclude that $M$ is homotopy equivalent to $S^{n}$. This concludes the proof of~\hyperref[A]{Theorem A}.
\end{proof}


\begin{thebibliography}{999}

\bibitem{Bor} A. Borel, {\em Seminar on Transformation Groups}, {\bf 46}, Annals of Mathematics Series, Princeton University Press (1961).

\bibitem{Br} G. Bredon,  {\em Introduction to compact transformation groups}, Academic Press {\bf 48} (1972). 


\bibitem{Fan08} F. Fang, {\em Finite isometry groups of $4$-manifolds with positive sectional curvature}, 
Math Z., {\bf 259} (2008), 643--656, DOI 10.1007/s00209-007-0242-0.



\bibitem{FR04} F. Fang and X. Rong,   {\em Positively curved manifolds with maximal discrete symmetry rank}, Am. Jour.  of Math. {\bf 126}  (2004) 227--245.

\bibitem{FR05} F. Fang and X. Rong,   {\em Homeomorphism classification of positively curved manifolds with almost maximal symmetry rank}, Am. Jour.  of Math. {\bf 332}  (2005) 81--101.


\bibitem{GS} K. Grove and C. Searle, Positively curved manifolds with maximal symmetry rank, Jour. of
Pure and Appl. Alg. 91 (1994), 137–142.
\bibitem{Hic97} A. Hicks, {\em Group actions and the topology of nonnegatively curved 4-manifolds},  Illinois J. Math. {\bf 41}  no. 3 (1997), 421--437.

\bibitem{KKSS} L. Kennard, E. Khalili Samani, and C. Searle, {\em Positive curvature and discrete abelian symmetry}, preprint ArXiv:


\bibitem{kennard2021splitting}  L. Kennard, M. Wiemeler, and B. Wilking, Splitting of torus representations and applications
 in the Grove symmetry program, arXiv:2106.14723 (2021).


\bibitem{kennard2022positive} L. Kennard, M. Wiemeler, and B. Wilking, Positive curvature, torus symmetry, and matroids, arXiv preprint arXiv:2212.08152 (2022).


\bibitem{KL09}  J. H. Kim, H. K. Lee, {\em On non-negatively curved 4-manifolds with discrete symmetry},  Acta Math. Hungar., {\bf 125} (3) (2009), 201--208. DOI: 10.1007/s10474-009-8237-4. 

\bibitem{R} X. Rong, Positively curved manifolds with almost maximal symmetry rank. Geom. Ded- 
icata, 95 (2002), 157-182.

\bibitem{SW08} X. Su and Y. Wang, {\em The Hopf Conjecture for positively curved manifolds with discrete abelian group actions},
Differential Geometry and its Applications, vol. 26, issue 3  (2008), pp. 313--322.


\bibitem{W10} Y. Wang, {\em Fundamental groups of closed positively curved manifolds with
              almost discrete abelian group actions},
 Acta Mathematica Scientia. Series B. English Edition, {\bf 30}, no. 1 (2010),
   203--207 {\scriptsize\url{https://doi.org/10.1016/S0252-9602(10)60037-9}}.

\bibitem{wilking2003torus} 
B. Wilking  {\em Torus actions on manifolds of positive sectional curvature},  
Acta Math., {\bf 191}  no. 2  (2003) 259--297. 

\bibitem{Yan94} D. G. Yang  {\em On the Topology of Nonnegatively Curved Simply Connected 4-manifolds 
with Discrete Symmetry}, 
Duke Math. J., {\bf 74}  (1994),  531--545.


\end{thebibliography}
\end{document}